\newtheorem{thm}{Theorem}[section]
\newtheorem{cor}[thm]{Corollary}
\newtheorem{defn}[thm]{Definition}
\newtheorem{prob}[thm]{Problem}
\newtheorem{prop}[thm]{Proposition}
\numberwithin{equation}{section}
\def\ni{\noindent}
\def\N{\mathbb{N}}
\title{\textbf{\sc Some New Results on the Curling Number of Graphs}}
\author{N. K. Sudev$^{\dagger}$, C. Susanth$^{\ddagger}$}
\affil{\small Centre for Studies in Discrete Mathematics \\ Vidya Academy of Science and Technology \\Thalakkottukara, Thrissur, Kerala, India.\\ {\tt sudevnk@gmail.com$^{\dagger}$, susanth\_c@yahoo.com$^{\ddagger}$.}}
\author{K. P. Chithra}
\affil{\small Naduvath Mana, Nandikkara \\ Thrissur, India.\\ {\tt chithrasudev@gmail.com}}
\author{Johan Kok}
\affil{\small Tshwane Metropolitan Police Department \\ City of Tshwane, South Africa. \\ {\tt kokkiek2@tshwane.gov.za}}
\author{Sunny Joseph Kalayathankal}
\affil{\small Department of Mathematics\\ Kuriakose Elias College\\ Kottayam, Kerala, India.\\ {\tt sunnyjoseph2014@yahoo.com}}
\date{}
\begin{document}
\maketitle

%\newpage 

\begin{abstract}
Let $S=S_1S_2S_3\ldots S_n$ be a finite string. Write $S$ in the form $XYY\ldots Y=XY^k$, consisting of a prefix $X$ (which may be empty), followed by $k$ copies of a non-empty string $Y$.  Then, the greatest value of this integer $k$ is called the curling number of $S$ and is denoted by $cn(S)$. Let the degree sequence of the graph $G$ be written as a string of identity curling subsequences say, $X^{k_1}_1\circ X^{k_2}_2\circ X^{k_3}_3 \ldots \circ X^{k_l}_l$. The compound curling number of $G$, denoted $cn^c(G)$ is defined to be, $cn^n(G) = \prod\limits^{l}_{i=1}k_i$. In this paper, we discuss the curling number and compound curling number of  certain products of graphs. 
\end{abstract}

\ni {\bf Keywords:} Curling number of a graph, compound curling number of a graph.

\ni {\footnotesize \textbf{AMS Classification Numbers:} 05C07, 05C76, 11B83.}
 
\section{Introduction}

For general notation and concepts in graph theory, we refer to \cite{BM1,CL1,GY1,FH,DBW}. For different types of graphs products, we further refer to \cite{HIS,IK}.  All graphs mentioned in this paper are simple, non-trivial, connected and finite graphs unless mentioned otherwise. 

The notion of \textit{curling number} of a sequence of integers is introduced in \cite{CLSW} as follows. 

\begin{defn}{\rm 
\cite{CLSW} Let $S=S_1S_2S_3\ldots S_n$ be a finite string. Write $S$ in the form $XYY\ldots Y=XY^k$, consisting of a prefix $X$ (which may be empty), followed by $k$ copies of a non-empty string $Y$. This can be done in several ways. Pick one with the greatest value of $k$ . Then, this integer $k$ is called the {\em curling number} of $S$ and is denoted by $cn(S)$.}
\end{defn} 

If we partition the sequence $S$ into two subsequences, say $X,Y$, we can write $S$ as the string $S =X\circ Y$. More useful and important studies on the curling number of integer sequences have been done in \cite{CLSW,CS1,FW1,NJAS}.

Extending the concepts of curling number of sequences mentioned in the above studies in to the degree sequences of graphs, the notion of the curling number of a graph $G$ has been introduced in \cite{KSC} as follows.

\begin{defn}{\rm 
\cite{KSC} Given a finite non-empty graph $G$ with the degree sequence $S=(a_1, a_2, a_3, \ldots, a_n)$, $a_i \in \N_0$.  This degree sequence $S$  can be written as a string of subsequences $S = X_1^{k_1}\circ X_2^{k_2}\circ \ldots \circ X_l^{k_l}$. Then, the curling number of $G$, denoted by $cn(G)$, is defined to be $cn(G)=\max\{k_i\}$, where $1 \le i \le l$.}
\end{defn}

The notion of a curling subsequence of a a given graph has been introduced in \cite{KSC}  as follows.

\begin{defn}{\rm 
\cite{KSC}  A \textit{curling subsequence} of a simple connected graph $G$ is defined to be a maximal subsequence $C$ of the well-arranged degree sequence of $G$ such that $cn(C) = \max\{cn(S_0)\}$ for all possible initial subsequences $S_0$. } 
\end{defn}

The concept of the compound curling number of a graph $G$ has also been introduced in \cite{KSC} as follows.

\begin{defn}\label{Def-3.1}{\rm 
\cite{KSC} Let the degree sequence of the graph $G$ be written as a string of identity curling subsequences say, $X^{k_1}_1\circ X^{k_2}_2\circ X^{k_3}_3 \ldots \circ X^{k_l}_l$. The \textit{compound curling number} of $G$, denoted $cn^c(G)$, is defined to be $cn^c(G) = \prod\limits^{l}_{i=1}k_i$, where $1 \le i \le l$.}
\end{defn}

The following is an important proposition on the curling number and compound curling number of regular graphs and is a relevant result in our present study. 

\begin{prop}\label{P-CCN}
{\rm \cite{KSC}} The curling number and the compound curling number of a regular graph are the same and are equal to the order of that graph.
\end{prop}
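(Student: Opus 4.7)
The plan is to reduce everything to the observation that an $r$-regular graph of order $n$ has degree sequence consisting of a single repeated symbol, namely $S=(r,r,\ldots,r)$ of length $n$, and then to read the two invariants straight off the definitions.

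First I would establish the curling number. Since $S$ has length $n$, any representation $S=XY^k$ with $Y$ non-empty forces $|Y|\cdot k\le n$, so $k\le n/|Y|\le n$. The bound $k=n$ is attained by choosing $X$ empty and $Y=(r)$, giving $S=Y^n$. Hence $cn(G)=n$. This step is essentially a length argument and should be immediate from the definition of $cn$.

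Next I would handle the compound curling number. By Definition~\ref{Def-3.1} we must write $S$ as a concatenation $X_1^{k_1}\circ X_2^{k_2}\circ\cdots\circ X_l^{k_l}$ of identity curling subsequences. Because the well-arranged degree sequence is the constant string $(r)^n$, every initial subsequence of length $m$ is again $(r)^m$ with curling number $m$; so the maximum over all initial subsequences is attained only at $m=n$, and the maximal subsequence realising it is the whole of $S$. Consequently the decomposition collapses to the single block $X_1=(r)$ with $k_1=n$ and $l=1$, so that
\[
cn^c(G)=\prod_{i=1}^{l}k_i = k_1 = n.
\]

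Combining the two computations yields $cn(G)=cn^c(G)=n=|V(G)|$, which is the claim. The only delicate point, and the step I would write out most carefully, is verifying that the curling subsequence decomposition of $(r)^n$ really is forced to be the trivial one-piece decomposition; once the maximality condition in the definition of a curling subsequence is unpacked, every shorter initial block has a strictly smaller curling number, so no alternative splitting can arise.
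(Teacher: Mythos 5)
Your proof is correct; note that the paper itself gives no proof of this proposition (it is quoted from the reference [KSC]), but your argument — the degree sequence of an $r$-regular graph of order $n$ is the constant string $(r)^n$, which is a single identity block with exponent $n$, whence $cn(G)=cn^c(G)=n$ — is exactly the computation the authors themselves carry out for regular product graphs in the proof of Proposition~\ref{P-CNP1a}, where they write the constant degree sequence as $\epsilon^1\circ(r_1+r_2)^{n_1n_2}$ and read off both invariants. The only cosmetic difference is that they prepend the null subsequence with exponent $1$ (which does not affect the product), while you take the one-block decomposition directly.
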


The curling number and compound curling number of certain fundamental standard graphs have been determined in \cite{KSC} and the major results are listed in the following table.
\begin{table}[h!]
\begin{center}
\begin{tabular}{|c|l|c|c|}
\hline
\textbf{No.} & \textbf{Graph} & $cn$ & $cn^c$ \\
\hline
$1$ & Complete Graph $K_n, n\geq 1$ & $n$ & $n$ \\
\hline
$2$ & Complete Bipartite Graph $K_{m,n}, m\neq n$ & max $\{m,n\}$ & $mn$ \\
\hline
$3$ & Complete Bipartite Graph $K_{n,n}$ & $2n$ & $n^2$ \\
\hline
$4$ & Path Graph $P_n, n\geq 4$ & $n-2$  & $2(n-2)$ \\
\hline
$5$ & Cycle $C_n$ & $n$ & $n$ \\
\hline
\end{tabular}
\end{center}
\end{table}

The curling number and compound curling number of the powers of certain graph classes have been studied in \cite{SSSCK}. Motivated from these studies, in this paper, we discuss the curling number and compound curling number of certain operations and products of graphs.

\section{Curling Number of Join of Graphs}

Let $G_1(V_1,E_1)$ and $G_2(V_2,E_2)$ be two graphs. Then, their {\em join}, denoted by $G_1+G_2$, is the graph whose vertex set is $V_1\cup V_2$ and edge set is $E_1\cup E_2\cup E_{ij}$, where $E_{ij}=\{u_iv_j:u_i\in G_1,v_j\in G_2\}$. The following theorem discusses the range of curling number of the join of two graphs.

\begin{thm}\label{T-CNJG1}
Let $G_1$ and $G_2$ be two graphs. Then, $\max\{cn(G_1), cn(G_2)\}\le cn(G_1+G_2)\le cn(G_1)+cn(G_2)$.
\end{thm}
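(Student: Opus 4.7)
The plan is to reduce the claim to a statement about multiplicities in the degree sequence. First I would observe that when the well-arranged (sorted) degree sequence is decomposed as $X_1^{k_1}\circ X_2^{k_2}\circ\cdots\circ X_l^{k_l}$, each block $X_i^{k_i}$ is forced to be a run of identical values: a repeated non-constant pattern cannot occur inside a monotone string. Hence $cn(G)$ is exactly the maximum multiplicity with which any single degree value appears in the degree sequence of $G$. This reformulation is what makes the bounds tractable, and I expect it to be the only subtle point.

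Next I would translate the join operation to degree sequences. Writing $n_i=|V(G_i)|$, every vertex $u\in V(G_1)$ gains exactly $n_2$ new neighbors in $G_1+G_2$, and every $v\in V(G_2)$ gains $n_1$ new neighbors. Therefore the degree sequence of $G_1+G_2$ is the multiset union of the translate of $G_1$'s sequence by $+n_2$ and the translate of $G_2$'s sequence by $+n_1$. In particular, for any integer $t$, the multiplicity of $t$ in the degree sequence of $G_1+G_2$ equals
\[
m_1(t-n_2)+m_2(t-n_1),
\]
where $m_i(\cdot)$ denotes multiplicity in the degree sequence of $G_i$.

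For the lower bound, I would pick a value $d^\ast$ that realizes $cn(G_1)$, so $m_1(d^\ast)=cn(G_1)$. Then the value $d^\ast+n_2$ occurs at least $cn(G_1)$ times in the degree sequence of $G_1+G_2$, giving $cn(G_1+G_2)\ge cn(G_1)$; the symmetric argument yields $cn(G_1+G_2)\ge cn(G_2)$, hence the maximum. For the upper bound, I would apply the multiplicity formula above to the value $t$ that realizes $cn(G_1+G_2)$: each summand is bounded by $cn(G_1)$ and $cn(G_2)$ respectively, so $cn(G_1+G_2)\le cn(G_1)+cn(G_2)$.

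The only delicate issue is the initial reformulation of $cn$ as a maximum multiplicity for sorted sequences, together with noting that a value $t$ can collect contributions from both $G_1$ and $G_2$ (when $d_1+n_2=d_2+n_1$ for some realized degrees), which is precisely the situation allowing the upper bound to be as large as $cn(G_1)+cn(G_2)$. Once these observations are in place, both inequalities follow immediately from the multiplicity formula.
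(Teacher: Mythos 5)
Your proof is correct and follows essentially the same route as the paper's: both rest on the observation that the join shifts every degree of $G_1$ by $|V(G_2)|$ and every degree of $G_2$ by $|V(G_1)|$, and then count how many vertices can end up sharing a common degree value. Your explicit multiplicity identity $m_1(t-n_2)+m_2(t-n_1)$ (together with the remark that $cn$ of a well-arranged sequence is just the maximum multiplicity) is a cleaner, more rigorous rendering of the two-case argument the paper carries out informally.
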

\begin{proof}
Let $V_1=\{v_1,v_2,v_3\ldots,v_n\}$ and $V_2=\{u_1,u_2,u_3\ldots,u_m\}$ be the vertex sets of the given graphs $G_1$ and $G_2$ respectively. Let $G=G_1+G_2$. Then, for $1\le i\le n$, every vertex $v_i$ of $G_1$ has the condition $d_G(v_i)= d_{G_1}(v_i)+m$ and for $1\le j\le m$, every vertex $u_j$ of $G_2$ has the condition $d_G(u_j)= d_{G_2}(u_j)+n$. 

If $d_{G_1}(v_i)+m\ne d_{G_2}(u_j)+n$, then the curling number of the subgraph $G-E(G_2)$ of $G$ is the same as $cn(G_1)$ and the curling number of the subgraph $G-E(G_1)$ of $G$ is the same as $cn(G_2)$. Hence, in this case, the curling number of $G_1+G_2$ is $\max\{cn(G_1),cn(G_2)\}$.

Assume that $d_{G_1}(v_i)+m=d_{G_2}(u_j)+n=q$, a constant, for some vertices $v_i\in V_1$ and for some vertices $u_j\in V_2$. The maximum possible number of vertices in $G_1$ which satisfy this equality is $cn(G_1)$ and the maximum possible number of vertices in $G_2$ which satisfy the above equality is $cn(G_2)$. Therefore, the maximum possible value of $cn(G_1+G_2)$ is $cn(G_1)+cn(G_2)$.

Hence, we have $\max\{cn(G_1), cn(G_2)\}\le cn(G_1+G_2)\le cn(G_1)+cn(G_2)$. This completes the proof.
\end{proof}

Determining the compound curling number of the join of two arbitrary graphs is a complex problem because of the uncertainty in their degree sequences. On the other hand, it is easier to determine the compound curling number of the join of two graphs having known or predictable pattern of degree sequences. Hence, we discuss the curling number and compound curling number of regular graphs in the following results.

\begin{prop}
If $G_1(V_1,E_1)$ and $G_2(V_2,E_2)$ are two regular graphs, then 
$$cn(G_1+G_2)=
\begin{cases}
|V_1|+|V_2|, & \text{if}~ ~ d_{v\in V_1}(v)+|V_2|=d_{u\in V_2}(u)+|V_1|,\\
\max\{|V_1|,|V_2|\},  & \text{otherwise}.
\end{cases}$$
\end{prop}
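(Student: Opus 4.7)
The plan is to exploit regularity to pin down the degree sequence of $G_1+G_2$ exactly, and then read off the curling number from this very restricted sequence. Let $G_1$ be $r_1$-regular on $n=|V_1|$ vertices and $G_2$ be $r_2$-regular on $m=|V_2|$ vertices. From the definition of the join, every vertex $v\in V_1$ satisfies $d_G(v)=r_1+m$, while every vertex $u\in V_2$ satisfies $d_G(u)=r_2+n$. Thus the degree multiset of $G=G_1+G_2$ consists of exactly $n$ copies of $r_1+m$ and $m$ copies of $r_2+n$, nothing more.

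First I would handle the case $r_1+m=r_2+n$. Here all $n+m$ vertices share a common degree, so $G_1+G_2$ is itself a regular graph on $n+m$ vertices. Proposition \ref{P-CCN} then immediately yields $cn(G_1+G_2)=n+m=|V_1|+|V_2|$, matching the first branch of the stated formula. Since the hypothesis of this case is precisely $d_{v\in V_1}(v)+|V_2|=d_{u\in V_2}(u)+|V_1|$, the branch condition is satisfied.

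Next, I would treat the case $r_1+m\neq r_2+n$. Now the well-arranged degree sequence of $G$ is obtained by listing the larger of the two degree values $n$ or $m$ times, followed by the smaller value repeated the remaining number of times, giving a string of the form $a^{k_1}\circ b^{k_2}$ with $\{k_1,k_2\}=\{n,m\}$. Since $a\ne b$, no longer repeated block can arise by crossing the boundary, so the curling number is simply $\max\{k_1,k_2\}=\max\{n,m\}=\max\{|V_1|,|V_2|\}$, matching the second branch.

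The argument is essentially forced by regularity, so there is no substantial obstacle; the only subtle point is to verify in the second case that no hidden repetition beyond the obvious two blocks can inflate the curling number. This is immediate because the sequence takes only two distinct values, so any repeated pattern $Y^k$ must lie entirely within one of the two constant blocks.
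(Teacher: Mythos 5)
Your proof is correct and follows essentially the same route as the paper: in the first case both arguments observe that $G_1+G_2$ is regular and invoke Proposition \ref{P-CCN}, and in the second case both reduce to the fact that the degree sequence splits into a block of length $|V_1|$ and a block of length $|V_2|$ with distinct values. The only difference is cosmetic: you write the well-arranged degree sequence explicitly as $a^{k_1}\circ b^{k_2}$ and read off $\max\{k_1,k_2\}$ directly (with a correct remark that no repeated pattern can straddle the boundary of a sorted two-valued string), whereas the paper reaches the same conclusion by appealing to the subgraph argument of Theorem \ref{T-CNJG1}; your version is, if anything, slightly more self-contained.
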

\begin{proof}
If $d_{v\in V_1}(v)+|V_2|=d_{u\in V_2}(u)+|V_1|$, then $G_1+G_2$ is a regular graph on $|V_1|+|V_2|$ vertices and hence by Proposition \ref{P-CCN}, $cn(G_1+G_2)=|V_1|+|V_2|$.

If $d_{v\in V_1}(v)+|V_2|\ne d_{u\in V_2}(u)+|V_1|$, as proved in Theorem \ref{T-CNJG1}, we have $cn(G_1+G_2-E(G_2))=cn(G_1)=|V_1|$ and $cn(G_1+G_2-E(G_1))=cn(G_2)=|V_2|$ and hence $cn(G_1+G_2)=\max\{|V_1|,|V_2|\}$. This completes the proof.
\end{proof}

The following result describes the compound curling number of the join of two regular graphs.

\begin{prop}
For two regular graphs $G_1(V_1,E_1)$ and $G_2(V_2,E_2)$, we have 
$$ cn^c(G_1+G_2)=
\begin{cases}
|V_1|+|V_2|, & \text{if}~ ~ d_{v\in V_1}(v)+|V_2|=d_{u\in V_2}(u)+|V_1|,\\
|V_1|\,|V_2|,  & \text{otherwise}.
\end{cases}$$
\end{prop}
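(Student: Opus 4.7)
The plan is to exploit the fact that the join of two regular graphs has a degree sequence with at most two distinct values, and then apply Definition \ref{Def-3.1} together with Proposition \ref{P-CCN}. Let $r_1$ and $r_2$ denote the common degrees in $G_1$ and $G_2$. By the definition of the join, every $v\in V_1$ satisfies $d_{G_1+G_2}(v)=r_1+|V_2|$ and every $u\in V_2$ satisfies $d_{G_1+G_2}(u)=r_2+|V_1|$. Setting $a=r_1+|V_2|$ and $b=r_2+|V_1|$, the degree multiset of $G_1+G_2$ is $\{a^{|V_1|},\,b^{|V_2|}\}$.

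In the first case, when $a=b$, the graph $G_1+G_2$ is regular on $|V_1|+|V_2|$ vertices, so Proposition \ref{P-CCN} yields $cn^c(G_1+G_2)=|V_1|+|V_2|$ immediately. This handles the first branch without any further work.

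In the second case, when $a\ne b$, I would arrange the degree sequence in non-increasing (or non-decreasing) order so that it consists of two consecutive constant blocks. I would then claim that this produces the identity curling subsequence decomposition $X_1^{|V_1|}\circ X_2^{|V_2|}$ (or the reversed order), with $X_1=(a)$ and $X_2=(b)$ being single-letter strings, so that Definition \ref{Def-3.1} gives $cn^c(G_1+G_2)=|V_1|\cdot|V_2|$.

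The main step requiring attention is the justification in the second case that the two-block partition is genuinely the identity curling subsequence decomposition referenced in Definition \ref{Def-3.1}. One must argue that no alternative splitting of a two-value string yields a larger product of exponents: any subsequence mixing both values would have to use a non-constant repeating pattern of length at least two, whose exponent is strictly bounded above by the corresponding single-letter exponent, so the canonical two-block partition is optimal. Given Proposition \ref{P-CCN} and this routine combinatorial check, the proposition follows directly from the degree computation above.
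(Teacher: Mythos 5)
Your proposal is correct and follows essentially the same two-case strategy as the paper: regularity plus Proposition \ref{P-CCN} when the two degree values coincide, and the two-block degree string $a^{|V_1|}\circ b^{|V_2|}$ with product of exponents $|V_1|\,|V_2|$ otherwise. The only difference is presentational — you compute the two constant blocks directly from the join's degree multiset (and add a justification that the block decomposition is the optimal one, which the paper takes for granted), whereas the paper identifies the same two blocks indirectly via the edge-deleted subgraphs $G_1+G_2-E(G_2)$ and $G_1+G_2-E(G_1)$; your version is, if anything, the cleaner of the two.
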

\begin{proof}
If $d_{v\in V_1}(v)+|V_2|=d_{u\in V_2}(u)+|V_1|$, then $G_1+G_2$ is a regular graph on $|V_1|+|V_2|$ vertices and hence by Proposition \ref{P-CCN}, $cn^c(G_1+G_2)=cn(G_1+G_2)=|V_1|+|V_2|$.

If $d_{v\in V_1}(v)+|V_2|\ne d_{u\in V_2}(u)+|V_1|$, then $cn^c(G_1+G_2-E(G_2))=cn(G_1+G_2-E(G_2))=cn(G_1)=cn^c(G_1)=|V_1|$ and $cn^c(G_1+G_2-E(G_1))=cn(G_1+G_2-E(G_1))=cn(G_2)=cn^c(G_2)=|V_2|$ and hence $cn^c(G_1+G_2)=|V_1|\,|V_2|$. 
\end{proof}

\section{Curling Number of the Cartesian Products of Two Graphs}

Let $G_1(V_1,E_1)$ and $G_2(V_2,E_2)$ be the two given graphs. The {\em Cartesian product} of $G_1$ and $G_2$ (see \cite{HIS,FH}), denoted by $G_1\Box G_2$, is the graph with vertex set $V_1\times V_2$, such that two points $u=(v_i, u_j)$ and $v=(v_k,u_l)$ in $V_1\times V_2$ are adjacent in $G_1\Box G_2$ whenever [$v_i=v_k$ and $u_j$ is adjacent to $u_l$] or [$u_j=u_l$ and $v_i$ is adjacent to $v_k$]. 

Let $G_1(V_1,E_1)$ and $G_2(V_2,E_2)$ be two non-empty graphs. Let us represent the vertex $(v_i,u_j)$ by $v_{ij}$. Then, for their Cartesian product $G_1\Box G_2$, let $C_k=\{v_{ij}\in V_1\times V_2: d(v_i)+d(v_j)=k\}$, where $\delta(G_1)+\delta(G_2)\le k\le \Delta(G_1)+\Delta(G_2)$. The set $C_k$ is called the compatible class of the integer $k$ in $G_1\Box G_2$ (see \cite{GS0} for the definition of compatible classes). Then, the curling number of $G_1\Box G_2$ is the cardinality of the maximal compatibility class in $G_1\Box G_2$. That is $cn(G)=\{|C_k|\}$.  

The maximal compatible class of the Cartesian product of two graph depends upon the lengths of identity subsequences of the degree sequences of both graphs. Hence, determining the curling number and compound curling number of two graphs is possible only when pattern of degree sequences is known or predictable. The following proposition discusses the curling number and compound curling number of the Cartesian product of two regular graphs.

\begin{prop}\label{P-CNP1a}
If $G_1(V_1,E_1)$ and $G_2(V_2,E_2)$ are two regular graphs, then we have $cn(G_1\Box G_2)=cn^c(G_1\Box G_2)= |V_1|\,|V_2|$.
\end{prop}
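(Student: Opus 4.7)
The plan is to reduce the statement to Proposition \ref{P-CCN} by showing that $G_1\Box G_2$ is itself a regular graph, whereupon both the curling number and the compound curling number are immediately equal to its order.

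First I would recall the standard degree formula for the Cartesian product: for any vertex $v_{ij}=(v_i,u_j)\in V_1\times V_2$, the adjacencies defined in the preamble give
\[
d_{G_1\Box G_2}(v_{ij}) \;=\; d_{G_1}(v_i) + d_{G_2}(u_j),
\]
since the neighbours of $(v_i,u_j)$ are precisely $(v_k,u_j)$ with $v_k\sim v_i$ in $G_1$ together with $(v_i,u_l)$ with $u_l\sim u_j$ in $G_2$, and these two families are disjoint.

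Next, using regularity, suppose $G_1$ is $r_1$-regular and $G_2$ is $r_2$-regular. Then the formula above yields $d_{G_1\Box G_2}(v_{ij})=r_1+r_2$ for every vertex $v_{ij}$, so $G_1\Box G_2$ is $(r_1+r_2)$-regular. Moreover, $|V(G_1\Box G_2)|=|V_1\times V_2|=|V_1|\,|V_2|$.

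Finally, I would invoke Proposition \ref{P-CCN}, which asserts that for any regular graph the curling number and the compound curling number both coincide with the order of the graph. Applied to $G_1\Box G_2$ this gives $cn(G_1\Box G_2)=cn^c(G_1\Box G_2)=|V_1|\,|V_2|$, as required. There is no real obstacle here; the only thing to be careful about is verifying the degree formula so that the reduction to Proposition \ref{P-CCN} is rigorous.
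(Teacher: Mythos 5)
Your proof is correct and takes essentially the same route as the paper: both arguments rest on the observation that the Cartesian product of an $r_1$-regular and an $r_2$-regular graph is $(r_1+r_2)$-regular on $|V_1|\,|V_2|$ vertices. The only cosmetic difference is that the paper writes out the degree sequence $(r_1+r_2)^{n_1n_2}$ explicitly instead of citing Proposition~\ref{P-CCN}, which you do; your reduction is the cleaner of the two.
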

\begin{proof}
If $G_1$ is an $r_1$-regular graph on $n_1$ vertices and $G_2$ is an $r_2$-regular graph on $n_2$ vertices, then $G_1\Box G_2$ is an $(r_1+r_2)$-regular graph on $n_1n_2$ vertices. Therefore, the degree sequence of $G_1\Box G_2$ is given by $S=(r_1+r_2)^{n_1n_2}=\epsilon^1 \circ (r_1+r_2)^{n_1n_2}$, where $\epsilon$ is the null subsequence of $S$. Therefore, $cn(G_1\Box G_2)=n_1n_2$. Also, $cn^c(G_1\Box G_2)=1\cdot (n_1n_2)=cn(G_1\Box G_2)$. This completes the proof.
\end{proof}

The curling number and compound curling number of certain standard graphs, which are the Cartesian product of certain standard graphs, are determined in the following results.

First, we consider a graph $G$ which is the Cartesian product of two paths. Note that the graph $P_2\Box P_2$ is isomorphic to the cycle $C_4$. Since the curling number of cycles have already been determined, we need consider the case $m=n=2$. For $m\ge 3, n=2$, the graph $P_m\Box P_2$ is known as a {\em ladder graph} and is usually denoted by $L_m$. The following result discusses the curling number and compound curling number of a ladder graph.

\begin{prop}
For $m\ge 3$, the curling number of a ladder graph $L_m$ is $cn(L_m)=\max\{4,2m-4\}$ and its compound curling number is $cn^c(L_m)=8(m-2)$.
\end{prop}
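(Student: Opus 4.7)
The plan is to compute the degree sequence of $L_m = P_m \Box P_2$ directly from the Cartesian product definition, partition it into identity curling subsequences, and then read off both invariants.

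First, I would use the fact that in any Cartesian product $G_1 \Box G_2$, the degree of a vertex $(v_i, u_j)$ equals $d_{G_1}(v_i) + d_{G_2}(u_j)$. In $P_m$, the two endpoints have degree $1$ and the remaining $m-2$ internal vertices have degree $2$; in $P_2$, both vertices have degree $1$. Multiplying out the possibilities, I get exactly $2 \cdot 2 = 4$ vertices of degree $1+1 = 2$ (the four ``corners'' of the ladder) and $2(m-2)$ vertices of degree $2+1 = 3$ (the vertices on the two internal rails). So the degree sequence of $L_m$ can be written as the well-arranged string
\[
S \;=\; 2^{4} \circ 3^{2(m-2)}.
\]

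Next, I would observe that this decomposition is already the partition into identity curling subsequences (each block is a constant repetition, and the two blocks have different constant values so cannot be merged). Therefore
\[
cn(L_m) \;=\; \max\{4,\; 2(m-2)\} \;=\; \max\{4,\; 2m-4\},
\]
and by Definition~\ref{Def-3.1},
\[
cn^c(L_m) \;=\; 4 \cdot 2(m-2) \;=\; 8(m-2).
\]

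There is essentially no obstacle here beyond being careful that the partition $2^{4} \circ 3^{2(m-2)}$ is genuinely the \emph{identity curling subsequence} decomposition, i.e., that no longer repetition pattern $Y^k$ is hiding across the boundary. Since the two blocks use distinct integers ($2$ versus $3$), any repeating block $Y$ that crosses the boundary would have to include both values, forcing $|Y| \geq 2$ and $k = 1$ in its contribution; this cannot exceed the counts $4$ or $2(m-2)$ obtained from within the homogeneous blocks. Hence both the stated curling number and compound curling number follow, completing the proof.
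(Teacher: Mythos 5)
Your proposal is correct and follows essentially the same route as the paper: count the $4$ corner vertices of degree $2$ and the $2(m-2)$ internal vertices of degree $3$, write the degree sequence as $2^4\circ 3^{2(m-2)}$, and read off $cn(L_m)=\max\{4,2m-4\}$ and $cn^c(L_m)=8(m-2)$. Your extra check that no repeating block can straddle the boundary between the two constant blocks is a small addition the paper omits, but the substance of the argument is the same.
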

\begin{proof}
In $L_n$, there are $4$ vertices of degree $2$ and $2(m-2)$ vertices of degree $3$. Therefore, is $cn(L_m)=\max\{4,2(m-2)\}$ and $cn^c(L_m)=4\cdot 2(m-2)=8(m-2)$.
\end{proof}

From the above proposition, it is clear that the curling number of a ladder graph $L_m$ is $4$ for $m\le 4$ and is $2m-4$ for $m\ge 4$. 

The following proposition discusses the curling number and compound curling number of a planar grid graph $G=P_m\Box P_n$, for $m,n\ge 3$.

\begin{prop}\label{prop-3.3}
For $m,n\ge 3$, a planar grid graph $G=P_m\Box P_n$,  we have 
$cn(G)=\max\{2(m+n-4),(m-2)(n-2)\}$ and $cn^c(G)=8((m-2)^2(n-2)+(m-2)(n-2)^2)$.
\end{prop}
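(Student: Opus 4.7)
The plan is to classify the vertices of $G=P_m\Box P_n$ by their degree using the standard fact that in a Cartesian product $d_G((v_i,u_j))=d_{P_m}(v_i)+d_{P_n}(u_j)$. Since an endpoint of a path has degree $1$ and an interior vertex has degree $2$, every vertex of $G$ carries degree in $\{2,3,4\}$. First I would count the four corner vertices (endpoint paired with endpoint) of degree $2$, the $2(m-2)+2(n-2)=2(m+n-4)$ side vertices of degree $3$ (endpoint paired with interior, or vice versa), and the $(m-2)(n-2)$ interior vertices of degree $4$.

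Second, I would assemble the well-arranged degree sequence. Because the three degree values are pairwise distinct, the sequence splits cleanly into three identity curling subsequences
$$S \;=\; 2^{4}\,\circ\, 3^{\,2(m+n-4)}\,\circ\, 4^{(m-2)(n-2)}.$$
By the definition of the curling number, $cn(G)$ is the largest of the three exponents, and since $m,n\ge 3$ forces $2(m+n-4)\ge 4$, the value of $4$ is dominated and we obtain $cn(G)=\max\{2(m+n-4),\,(m-2)(n-2)\}$.

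Third, Definition \ref{Def-3.1} gives $cn^c(G)$ as the product of those three exponents, namely
$$cn^c(G)\;=\;4\cdot 2(m+n-4)\cdot (m-2)(n-2)\;=\;8(m+n-4)(m-2)(n-2).$$
Writing $(m+n-4)=(m-2)+(n-2)$ and distributing yields exactly $8\bigl((m-2)^{2}(n-2)+(m-2)(n-2)^{2}\bigr)$, which is the claimed closed form.

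The computation is essentially bookkeeping on the vertex classes of a grid; the only point that deserves a line of justification is that partitioning by degree really does furnish the identity curling subsequences demanded by Definition \ref{Def-3.1}. This is immediate from the pairwise distinctness of the degrees $2,3,4$, which prevents two blocks from merging into a longer identical run. I note that Proposition \ref{P-CNP1a} cannot be invoked as a shortcut, since paths $P_m,P_n$ with $m,n\ge 3$ are not regular; so the tripartite count above is the genuine obstacle, and once it is in place both formulas drop out directly from the definitions.
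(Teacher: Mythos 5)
Your proof is correct and follows essentially the same route as the paper: count the $4$ corners of degree $2$, the $2(m+n-4)$ boundary vertices of degree $3$, and the $(m-2)(n-2)$ interior vertices of degree $4$, write the degree sequence as three identity blocks, and read off the maximum and the product of the exponents. Your added observation that $2(m+n-4)\ge 4$ for $m,n\ge 3$ (justifying the omission of $4$ from the maximum) is a small but worthwhile clarification the paper leaves implicit.
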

\begin{proof}
Note that in $P_m\Box P_n$, there are $(m-2)(n-2)$ vertices of degree $4$, $2(m+n-4)$ vertices of degree $3$ and $4$ vertices of degree $2$. Hence, the degree sequence $S$ of the planar grid graph $P_m\Box P_n$ is given by $S=(4)^{(m-2)(n-2)}\circ (3)^{2(m+n-4)}\circ (2)^4$. Therefore, we have $cn(P_m\Box P_n)=\max\{2(m+n-4), (m-2)(n-2)\}$. Moreover, $cn^c(P_m\Box P_n)=8(m-2)(n-2)(m+n-4)=8((m-2)^2(n-2)+(m-2)(n-2)^2)$.
\end{proof}

In view of Proposition \ref{prop-3.3}, we observe that the curling number of a planar grid graph $P_m\Box P_n$ is $(m-2)(n-2)$ when either $m=n=6$ or $m,n\ge 5, m+2\ge 13$. On all other cases, $cn(P_m\Box P_n)=2(m+n-4)$.

Next, we discuss the curling number of the prism graphs $C_m\Box P_n$. Note that the prism graph $C_m\Box P_2$ is a $3$-regular graph and hence by Proposition \ref{P-CNP1a}, its curling number and the compound curling number are the same and is $2m$. Similarly, for $n=3$, the prism graph $C_m\Box P_3$ consists of $2m$ vertices of degree $4$. Hence, $cn(C_m\Box P_3)=2m$ and $cn^c(C_m\Box P_3)=2m^2$.

Hence, we need to consider the paths $P_n$ with $n\ge 4$ only for further studies on the curling number of prism graphs. In the following result, we discuss the curling number and compound curling number of prism graphs. 

\begin{prop}
For $n\ge 4$, the curling number of a prism graph $G=C_m\Box P_n$ is $cn(G)=m(n-2)$ and $cn^c(G)=2m^2(n-2)$.
\end{prop}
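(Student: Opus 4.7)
The plan is to compute the degree sequence of $G = C_m\Box P_n$ directly from the standard Cartesian product identity $d_G((u,v)) = d_{C_m}(u) + d_{P_n}(v)$, then read off both invariants. Every vertex of $C_m$ has degree $2$, while $P_n$ has exactly two endpoints of degree $1$ and $n-2$ interior vertices of degree $2$. Consequently, the vertices of $G$ split into two classes: those $(u,v)$ with $v$ an endpoint of $P_n$ have degree $2+1=3$, and the rest have degree $2+2=4$.

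Next I would count the sizes of these two classes. Each of the two endpoints of $P_n$ pairs with all $m$ vertices of $C_m$, giving $2m$ vertices of degree $3$, while each of the $n-2$ interior vertices of $P_n$ pairs with $m$ vertices of $C_m$, giving $m(n-2)$ vertices of degree $4$. Writing the well-arranged degree sequence as a string of identity curling subsequences yields
$$S = (4)^{m(n-2)} \circ (3)^{2m}.$$
Hence $cn(G) = \max\{m(n-2),\, 2m\}$. Under the hypothesis $n \ge 4$ we have $n-2 \ge 2$, so $m(n-2) \ge 2m$, forcing $cn(G) = m(n-2)$. Applying Definition \ref{Def-3.1} then gives $cn^c(G) = m(n-2)\cdot 2m = 2m^2(n-2)$.

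I do not anticipate a serious obstacle; the argument is parallel in spirit to the planar-grid computation in Proposition \ref{prop-3.3}. The only item worth flagging is verifying that the two blocks in the displayed decomposition genuinely constitute the identity curling subsequences of $S$—but since they are built from distinct degree values ($4$ versus $3$), no block can be absorbed into the other and no reordering can produce a longer identity run. The strict inequality $n-2\ge 2$ supplied by the hypothesis is what justifies discarding the $2m$ branch of the maximum, and it is the only place where $n\ge 4$ is used.
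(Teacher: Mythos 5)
Your proof is correct and follows essentially the same route as the paper: count the $m(n-2)$ vertices of degree $4$ and $2m$ vertices of degree $3$, write $S=(4)^{m(n-2)}\circ(3)^{2m}$, and read off $cn$ and $cn^c$. Your explicit justification that $n\ge 4$ forces $m(n-2)\ge 2m$ is a small clarity improvement over the paper, which leaves that comparison implicit.
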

\begin{proof}
In the prism graph $C_m\Box P_n, n\ge 4$, there are $m(n-2)$ vertices of degree $4$ and $2m$ vertices of degree $3$. Hence, the degree sequence $S$ of $C_m\Box P_n$ is given by $S=(4)^{m(n-2)}\circ (3)^{2m}$. Therefore, we have $cn(C_m\Box P_n)=m(n-2)$ and $cn^c(C_m\Box P_n)=2m^2(n-2)$.
\end{proof}

The curling number and compound curling number of a torus grid graph $C_m\Box C_n$ are determined in following proposition.

\begin{prop}
The curling number and the compound curling number of the torus grid graph $C_m\Box C_n$ is $mn$.
\end{prop}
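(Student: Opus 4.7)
The plan is to recognize that this proposition is an immediate corollary of Proposition \ref{P-CNP1a} applied to the two cycles $C_m$ and $C_n$. Both $C_m$ and $C_n$ are $2$-regular graphs, on $m$ and $n$ vertices respectively. Therefore their Cartesian product $C_m\Box C_n$ fits precisely into the hypothesis of Proposition \ref{P-CNP1a}, which asserts that for two regular graphs $G_1(V_1,E_1)$ and $G_2(V_2,E_2)$, we have $cn(G_1\Box G_2)=cn^c(G_1\Box G_2)=|V_1|\,|V_2|$.

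First I would state the observation that every vertex $(v_i,u_j)$ of $C_m\Box C_n$ has degree $d_{C_m}(v_i)+d_{C_n}(u_j)=2+2=4$, so $C_m\Box C_n$ is a $4$-regular graph on exactly $mn$ vertices. Next I would invoke Proposition \ref{P-CNP1a} with $r_1=r_2=2$, $n_1=m$, $n_2=n$ to conclude that $cn(C_m\Box C_n)=cn^c(C_m\Box C_n)=mn$. As a sanity check one could instead appeal directly to Proposition \ref{P-CCN}, since regularity on $mn$ vertices already forces both the curling number and the compound curling number to equal $mn$.

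I do not anticipate any genuine obstacle: once the regularity of the Cartesian product of two regular graphs is noted, the conclusion follows by quoting a proposition already established in the paper. The only thing to be mildly careful about is to write the degree sequence as the single identity block $(4)^{mn}$ (or equivalently $\epsilon^1\circ (4)^{mn}$, in the notation used in the proof of Proposition \ref{P-CNP1a}), so that the compound curling number is read off as $1\cdot mn = mn$ rather than being mistakenly split into several factors.
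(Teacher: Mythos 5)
Your proposal is correct and follows essentially the same route as the paper: observe that $C_m\Box C_n$ is $4$-regular on $mn$ vertices and invoke Proposition \ref{P-CNP1a} (the paper's proof is exactly this one-line appeal). Your additional remarks on the degree sequence $(4)^{mn}$ and the alternative appeal to Proposition \ref{P-CCN} are consistent with how Proposition \ref{P-CNP1a} itself is proved.
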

\begin{proof}
The torus grid graph $C_m\Box C_n$ is a $4$-regular graph and hence, by Proposition \ref{P-CNP1a}, we have $cn(C_m\Box C_n)=cn^c(C_m\Box C_n)=mn$. 
\end{proof}

A \textit{stacked book graph}, denoted by $B_{m,n}$, is a graph defined by $B_{m,n}=K_{1,m}\Box P_n$. The graph $B_m=K_{1,m}\Box P_2$ is called a \textit{book graph}. 

\begin{prop}
The curling number of a book graph $B_{m,n}$ is $2m$ if $n=2,3$. Moreover, the compound curling number of $B_{m,2}$ is $4m$ and that of $B_{m,3}$ is $4m^2$.
\end{prop}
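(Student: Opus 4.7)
The plan is to compute the degree sequence of $B_{m,n}=K_{1,m}\Box P_n$ explicitly by using the Cartesian product identity $d_{G\Box H}((u,v))=d_G(u)+d_H(v)$, and then to read off $cn$ and $cn^c$ from the multiplicities appearing in the well-arranged degree sequence. Recall that in $K_{1,m}$ the central vertex has degree $m$ while each of the $m$ leaves has degree $1$, and in $P_n$ the two endpoints have degree $1$ while each of the $n-2$ internal vertices has degree $2$. Thus the degree of any vertex of $B_{m,n}$ is fully determined by classifying its first coordinate as centre/leaf and its second coordinate as endpoint/internal, so the task reduces to counting how many vertices fall into each such type.

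First I would handle $n=2$. Since $P_2$ has no internal vertex, only two types arise: (centre, endpoint) contributes $2$ vertices of degree $m+1$, while (leaf, endpoint) contributes $2m$ vertices of degree $2$. The well-arranged degree sequence is therefore $(m+1)^{2}\circ (2)^{2m}$, which immediately yields $cn(B_{m,2})=\max\{2,2m\}=2m$ and $cn^c(B_{m,2})=2\cdot 2m=4m$. For $n=3$, the path $P_3$ contributes one internal vertex, producing four types: (centre, middle) gives $1$ vertex of degree $m+2$; (centre, endpoint) gives $2$ vertices of degree $m+1$; (leaf, middle) gives $m$ vertices of degree $3$; and (leaf, endpoint) gives $2m$ vertices of degree $2$. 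Writing this as $(m+2)^{1}\circ (m+1)^{2}\circ (3)^{m}\circ (2)^{2m}$ we read off $cn(B_{m,3})=\max\{1,2,m,2m\}=2m$ and $cn^c(B_{m,3})=1\cdot 2\cdot m\cdot 2m=4m^{2}$. A quick sanity check confirms that the total number of vertices in each block sums to $(m+1)n$, as required.

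There is no serious obstacle; the argument is essentially bookkeeping once the Cartesian product degree formula is invoked. The only mild subtlety is to verify that the constant blocks in the decomposition correspond to genuinely distinct degree values, so that no two adjacent identity subsequences collapse into a longer one. In the $n=3$ case this requires $m\ge 3$, since otherwise coincidences such as $m+1=3$ for $m=2$ merge two blocks; however, direct inspection shows the final values of $cn$ and $cn^c$ are unaffected by such collisions, so the stated formulas remain valid for all admissible $m$.
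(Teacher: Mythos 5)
Your proof is correct and follows essentially the same route as the paper: classify the vertices of $K_{1,m}\Box P_n$ by (centre/leaf, endpoint/internal) type, write down the degree sequence $(m+1)^2\circ(2)^{2m}$ for $n=2$ and $(m+2)^1\circ(m+1)^2\circ(3)^m\circ(2)^{2m}$ for $n=3$, and read off $cn$ and $cn^c$. Your extra remark about possible block collisions (e.g.\ $m+1=3$ when $m=2$) is a point the paper silently ignores, and is worth keeping, though note the formulas do break down in the fully degenerate case $m=1$.
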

\begin{proof}
The book graph $B_m$ consists of $2m$ vertices of degree $2$ and $2$ vertices of degree $m+1$. That is, the degree sequence of $B_m$ is $(2)^{2m}\circ(m+1)^2$. Hence, the curling number of $B_m$ is $2m$ and its compound curling number is  $4m$.

The graph $B_{m,3}$ consists of $2m$ vertices of degree $2$ and $m$ vertices of degree $3$, $2$ vertices of degree $m+1$ and one vertex of degree $m+2$. That is, the degree sequence of $B_{m,3}$
is $(2)^{2m}\circ (3)^m \circ (m+1)^2\circ (m+2)^1$. Hence, the curling number of $B_{m,3}$ is $2m$ and the compound curling number of $B_{m,3}$ is $4m^2$.
\end{proof}

\begin{prop}
For $n\ge 4$, the curling number of a stacked book graph $B_{m,n}=K_{1,m}\Box P_n$, $cn(B_{m,n})=m(n-2)$ and $cn^c(B_{m,n})=(2cn(B_{m,n}))^2$.
\end{prop}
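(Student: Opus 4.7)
The plan is to read off the degree sequence of $B_{m,n}$ directly from the Cartesian-product rule $d_{G\Box H}((u,v))=d_G(u)+d_H(v)$, and then to extract $cn$ and $cn^c$ by counting equal degrees. I will work under the tacit hypothesis $m\ge 3$, so that the four candidate degree values appearing below are pairwise distinct; the degenerate cases $m=1,2$ would follow by the same method after merging collapsed blocks.

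First I would partition $V(K_{1,m})$ into its unique centre (degree $m$) and its $m$ pendant vertices (degree $1$), and $V(P_n)$ into its $2$ endpoints (degree $1$) and its $n-2$ internal vertices (degree $2$). Cross-multiplying yields exactly four vertex classes in $B_{m,n}$: $n-2$ vertices of degree $m+2$ (centre with interior), $2$ vertices of degree $m+1$ (centre with endpoint), $m(n-2)$ vertices of degree $3$ (pendant with interior) and $2m$ vertices of degree $2$ (pendant with endpoint). For $m\ge 3$ these four values are pairwise distinct, so the well-arranged degree sequence of $B_{m,n}$ decomposes as $X_1^{n-2}\circ X_2^{2}\circ X_3^{m(n-2)}\circ X_4^{2m}$ with each $|X_i|=1$.

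Comparing the four block lengths, the hypothesis $n\ge 4$ yields $m(n-2)\ge 2m$, $m(n-2)\ge n-2$, and $m(n-2)\ge 2$, so the longest block has length $m(n-2)$, giving $cn(B_{m,n})=m(n-2)$. Applying Definition \ref{Def-3.1} then produces
\[
cn^c(B_{m,n})=(n-2)\cdot 2\cdot m(n-2)\cdot 2m=4m^2(n-2)^2=\bigl(2m(n-2)\bigr)^2=\bigl(2\,cn(B_{m,n})\bigr)^2.
\]
The main obstacle I anticipate is excluding any longer identity subsequence $X^k$ with $|X|\ge 2$ that could push $cn$ past $m(n-2)$; this is a short check since the multiplicity of such a repeated pattern is bounded above by the smallest count of the distinct degree values occurring in $X$, and in every case that bound is at most $m(n-2)$.
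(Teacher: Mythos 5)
Your proof is correct and follows essentially the same route as the paper's: enumerate the four degree classes of $K_{1,m}\Box P_n$, write the degree sequence as four identity blocks of lengths $n-2$, $2$, $m(n-2)$, $2m$, and compare block lengths using $n\ge 4$. In fact your version is more careful than the paper's on two points worth noting: the paper records the central vertices as having degrees $5$ and $4$ (correct only for $m=3$) where your $m+2$ and $m+1$ are right in general, and your explicit hypothesis $m\ge 3$ is genuinely needed, since for $m=1,2$ the degree values coincide, the blocks merge, and the stated formulas $cn(B_{m,n})=m(n-2)$ and $cn^c(B_{m,n})=4m^2(n-2)^2$ fail.
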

\begin{proof}
In a stacked book graph $B_{m,n}$, we have $n-2$ vertices with degree $5$ (central vertices of each copy, other than the first and last, of $K_{1,m}$), $2$ vertices of degree $4$ (the central vertices of first and last copy of $K_{1,m}$), $n-2$ vertices in each copy of $P_n$ with degree $3$ and $2$ vertices in each copy of $P_n$ with degree $2$. Hence, the degree sequence $S$ of $B_{m,n}$ is given by $S=(5)^{n-2}\circ (4)^2\circ (3)^{m(n-2)}\circ (2)^{2m}$. Hence, $cn(B_{m,n})=m(n-2)$ and $cn^c(B_{m,n})=4m^2(n-2)^2=(2cn(B_{m,n}))^2$.
\end{proof}

From the above results, we also note that the the curling number of the Cartesian product of two graphs is the product of curling numbers of the factor graphs only when the length of maximal identity subsequence is sufficiently greater than that of the other subsequences. This fact highlights the scope for further studies in this direction for vast number of other known graph classes.

Invoking the above results on different graph classes, we infer following interesting result.

\begin{prop}
If $G_1(V_1,E_1)$ is a regular graph and $G_2(V_2,E_2)$ be an arbitrary graph, then $cn(G_1\Box G_2)=|V_1|\cdot cn(G_2)$. 
\end{prop}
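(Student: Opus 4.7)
The plan is to reduce the statement to a direct analysis of the degree sequence of $G_1\Box G_2$, using the fact that regularity of $G_1$ makes every vertex of $G_1$ contribute the same additive constant to its partner's degree. Write $n_1=|V_1|$ and let $G_1$ be $r_1$-regular. By definition of the Cartesian product, for every $(v_i,u_j)\in V_1\times V_2$ one has $d_{G_1\Box G_2}(v_i,u_j)=d_{G_1}(v_i)+d_{G_2}(u_j)=r_1+d_{G_2}(u_j)$. Hence the degree of a vertex in $G_1\Box G_2$ depends only on the second coordinate.

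Next, I would take the well-arranged degree sequence of $G_2$ decomposed into identity curling subsequences, say
\[
S(G_2)=(d_1)^{k_1}\circ (d_2)^{k_2}\circ\cdots\circ (d_l)^{k_l},
\]
with distinct values $d_1,d_2,\ldots,d_l$, so that $G_2$ has exactly $k_i$ vertices of degree $d_i$ and $cn(G_2)=\max_i k_i$. The observation from the previous paragraph implies that, for each $i$, the number of vertices of $G_1\Box G_2$ of degree $r_1+d_i$ is exactly $n_1k_i$, since we have $n_1$ choices of first coordinate for each of the $k_i$ vertices in $G_2$ with degree $d_i$.

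Because the map $d\mapsto r_1+d$ is injective, the values $r_1+d_1,\ldots,r_1+d_l$ are again pairwise distinct, and no further merging of identity blocks can occur. Thus the well-arranged degree sequence of $G_1\Box G_2$ is
\[
S(G_1\Box G_2)=(r_1+d_1)^{n_1k_1}\circ (r_1+d_2)^{n_1k_2}\circ\cdots\circ (r_1+d_l)^{n_1k_l},
\]
and the definition of curling number gives
\[
cn(G_1\Box G_2)=\max_{1\le i\le l}\{n_1k_i\}=n_1\max_{1\le i\le l}\{k_i\}=|V_1|\cdot cn(G_2).
\]

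The main point to be careful about is precisely this last injectivity step: one must verify that the identity subsequences coming from $G_2$ remain the identity subsequences of $G_1\Box G_2$ (rather than coalescing with contributions from elsewhere), which is immediate here because every vertex of $G_1\Box G_2$ inherits its degree from a unique $d_i$. No delicate combinatorial argument is required; the regularity of $G_1$ is exactly the hypothesis that guarantees a clean factorisation of the multiplicities.
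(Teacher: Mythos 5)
Your proof is correct and follows essentially the same route as the paper's: regularity of $G_1$ forces $d_{G_1\Box G_2}(v_i,u_j)=r_1+d_{G_2}(u_j)$ to depend only on the second coordinate, so each identity block of the degree sequence of $G_2$ is inflated by the factor $|V_1|$. Your explicit remark that the shift $d\mapsto r_1+d$ is injective, so that no blocks coalesce, is a worthwhile detail that the paper's proof leaves implicit.
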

\begin{proof}
Let $G_1$ be a $k$-regular graph on $m$ vertices and let $G_2$ be a graph on $n$ vertices. Then every vertex $v_{ij}$ in $G_1\Box G_2$ has the degree $k+d_j$, where $d_j=d(v_j); \ v_j\in V(G_2)$. Clearly, the maximal identity sequence in every copy of $G_2$ in $G_1\Box G_2$ is $(k+d_j)^{cn(G_2)}$ and hence the maximal identity sequence of degree sequence in $G_1\Box G_2$ is $(k+d_j)^{m\cdot cn(G_2)}$. Therefore, $cn(G_1\Box C_1)=m\cdot cn(G_2)$. 
\end{proof}

In the following section, we discuss the curling number and compound curling number of other products of the graph classes we discussed in the above section.

\section{Curling Number of Strong Product of Graphs}

The {\em strong product} of two graphs $G_1$ and $G_2$ is the graph, denoted by $G_1\boxtimes G_2$, whose vertex set is $V(G_1) \times V(G_2)$, the vertices $(v,u)$ and $(v',u')$ are adjacent in $G_1\boxtimes G_2$ if $[vv'\in E(G_1)~\text{and}~ u=u']$ or $[v=v' ~ \text{and}~uu'\in E(G_2)]$ or $[vv'\in E(G_1)$ and $uu'\in E(G_2)]$. 

For any vertex $(v_i,u_j)$ in the strong product graph $G_1\boxtimes G_2$, we have $d_G(v_i,u_j)=d_{G_1}(v_i)+d_{G_2}(u_j)+d_{G_1}(v_i)\,d_{G_2}(u_j)$.

We can see that the curling number of the strong product of two regular graphs and that of their Cartesian product are always equal, even though the degree of corresponding vertices in two product graphs are different. This is true for the compound curling number also. Analogous to the discussions in the previous section, the curling number of the strong product of two graphs can be determined only when the degree sequences of two factor graphs are known or predictable.  

%\begin{thm}\label{T-CNSP1}
%For two graphs $G_1$ and $G_2$, we have $cn(G_1\boxtimes G_2)=cn(G_1)cn(G_2)$.
%\end{thm}
%\begin{proof}
%Let $k_1=cn(G_1)$ and $k_2=cn(G_2)$. Let $d_i$ be the degree of $k_1$ vertices in $G_1$ and $d_j^{\prime}$ be the degree of $k_2$ vertices in $G_2$. Then, there will be $k_1k_2$ vertices in $G_1\boxtimes G_2$ having the degree $d_i+d_j^{\prime}+d_id_j^{\prime}$. Note that it is the maximum number of vertices in $G_1\boxtimes G_2$ having the same degree. Therefore, $cn(G_1\boxtimes G_2)=k_1k_2=cn(G_1)cn(G_2)$.
%\end{proof}

%Note that the above result is similar to Theorem \ref{T-CNCP1}. In view of this fact we note that the similar results hold for the strong product of regular graphs also.

Analogous results for grid graphs, prism graphs and book graphs proved in the previous section are as follows.

Since $P_2\boxtimes P_2$ is $K_4$, its curling number has already been computed. We can also verify that curling number of the  strong product graph $G=P_m\boxtimes P_n$ as follows. Since, a vertex in a path graph has degree either $1$ or $2$, then the degree of a vertex in $G$ can be $3, 5$ or $8$. It can be noted that the degree sequence $S$ of $G$ can be written as $(8)^{(m-2)(n-2)}\circ (5)^{2(m+n-4)}\circ (3)^4$. Hence, for $m,n\ge 3$, we have $cn(P_m\boxtimes P_n)=\max\{2(m+n-4),(m-2)(n-2)\}$ and  $cn^c(P_m\boxtimes P_n)=8((m-2)^2(n-2)+(m-2)(n-2)^2=cn^c(P_m\Box P_n)$.

Note that $C_m\boxtimes P_2$ is a regular graph and hence the discussion on their curling number does not offer anything new. For $m, n\ge 3$, the degree sequence of the graph $C_m\boxtimes P_n$ can be written as $(5)^{2m}\circ (8)^{m(n-2)}$. Hence, $cn(C_m\boxtimes P_n)=\max\{2m, m(n-2)\}$ and $cn^c(C_m\boxtimes P_n)=2m^2(n-2)=cn^c(C_m\Box P_n)$.  

In a similar way, the degree sequence of the graph $C_m\boxtimes C_n$ can be written as $(8)^{mn}$, as it is a regular graph. Therefore, by Proposition \ref{P-CCN}, $cn^c(C_m\boxtimes C_n)=mn=cn(C_m\boxtimes C_n)$. %In this case also, our observations hold good.

Note that $2m$ vertices of the graph $C_m\boxtimes P_2$ have degree $3$ and only two vertices have degree more than that. Therefore, $cn(K_{1,m}\boxtimes P_n)=2m$ and $cn^c(K_{1,m}\boxtimes P_n)=4m$.  For $n\ge3$, it can be noted that for the graph in the graph $G=K_{1,m}\boxtimes P_n$, analogous to a stacked book graph $B_{m,n}$, the degree of a vertex  can be $3,5, 2m+1$ or $2+3m$ and as explained above we can write its degree sequence as $S=(3)^{2m}\circ (5)^{m(n-2)}\circ (2m+1)^{2}\circ (3m+2)^{n-2}$. Therefore, $cn(K_{1,m}\boxtimes P_n))=\max\{2m,m(n-2)\}$ $cn^c(K_{1,m}\boxtimes P_n)=4m^2(n-2)^2=cn^c(B_{m,n})$.

Invoking the above observations, we notice that the Cartesian product and strong products of certain graphs, have the same curling number and the same compound curling number. But checking the degree sequences of different strong product of graphs, we can not say that this property does not hold in all cases. Therefore, the graph classes holding this property attracts more attention. 

As mentioned in the previous section, we can also observe that $cn(G_1\boxtimes G_2)=cn(G_1)\cdot cn(G_2)$ only when the lengths of maximal identity sequences of the degree sequence of the respective factor graphs are sufficiently greater than (or equal to) the lengths of other identity subsequences. Therefore, further investigation for identifying and characterising such graph classes is also promising.

%\begin{conj}\label{C-CCN}
%The compound curling number of the Cartesian product and the strong product of two graphs $G_1$ and $G_2$ are equal. That is,  for two non-empty finite graphs $G_1$ and $G_2$, $cn^c(G_1\Box G_2)=cn^c(G_1\boxtimes G_2)$. 
%\end{conj}

\section{Curling Number of Corona of Two Graphs}

The \textit{corona} of two graphs $G_1$ and $G_2$ (see \cite{RFH}), denoted by $G_1\odot G_2$, is the graph obtained by taking $|V(G_1)|$ copies of the graph $G_2$ and adding edges between each vertex of $G_1$ to every vertex of one (corresponding) copy of $G_2$. The following theorem establishes a range for the curling number of the corona of two graphs.

\begin{thm}\label{T-CNC1}
If $G_1(V_1,E_1)$ and $G_2(V_2,E_2)$ are two non-empty, non-trivial finite graphs, then $|V_1|cn(G_2)\le cn(G_1\odot G_2)\le cn(G_1)+|V_1|cn(G_2)$.
\end{thm}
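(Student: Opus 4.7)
The plan is to read the curling number of $G_1\odot G_2$ as the maximum multiplicity of a single degree value in its well-arranged (sorted) degree sequence, which is exactly how every example tabulated earlier in the paper is computed. First I would record the two degree formulas governing the corona: a vertex $v_i\in V_1$ has $d_{G_1\odot G_2}(v_i)=d_{G_1}(v_i)+|V_2|$, while a vertex $u$ belonging to the $i$-th copy of $G_2$ has $d_{G_1\odot G_2}(u)=d_{G_2}(u)+1$. So the vertex set of $G_1\odot G_2$ splits cleanly into the $|V_1|$ original vertices of $G_1$ (with degrees shifted by $|V_2|$) together with $|V_1|$ disjoint copies of $G_2$ (each with degrees shifted by $1$).

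For the lower bound I would pick a degree value $d^{*}$ that is attained by exactly $cn(G_2)$ vertices of $G_2$; such a $d^{*}$ exists by the very definition of $cn(G_2)$. In each of the $|V_1|$ copies of $G_2$, those $cn(G_2)$ vertices now share the common degree $d^{*}+1$ in $G_1\odot G_2$, so pooling across all copies produces an identity subsequence of length $|V_1|\,cn(G_2)$ in the sorted degree sequence of $G_1\odot G_2$. This immediately yields $cn(G_1\odot G_2)\ge |V_1|\,cn(G_2)$.

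For the upper bound I would fix an arbitrary target degree $k$ and count the vertices of $G_1\odot G_2$ that attain it. By the degree formulas above, any such vertex falls into one of two disjoint classes: (i) a vertex $v_i\in V_1$ with $d_{G_1}(v_i)=k-|V_2|$, and class (i) contributes at most $cn(G_1)$ vertices in total; or (ii) a vertex $u$ in some copy of $G_2$ with $d_{G_2}(u)=k-1$, and each of the $|V_1|$ copies contributes at most $cn(G_2)$ such vertices, for a total of at most $|V_1|\,cn(G_2)$. Adding the two contributions gives at most $cn(G_1)+|V_1|\,cn(G_2)$ vertices of any fixed degree $k$, hence the desired upper bound.

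The proof is conceptually straightforward; the only point that needs any care, rather than a genuine obstacle, is confirming that the two vertex types above really are the only contributors to a common-degree block, which is immediate from the two explicit degree formulas and the disjointness of $V_1$ from the copies of $V_2$ inside $V(G_1\odot G_2)$.
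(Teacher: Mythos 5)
Your proof is correct and follows essentially the same route as the paper: both use the corona degree formulas $d(v_i)=d_{G_1}(v_i)+|V_2|$ and $d(u)=d_{G_2}(u)+1$, obtain the lower bound by pooling the maximal identity degree class of $G_2$ across all $|V_1|$ copies, and obtain the upper bound by noting that at most $cn(G_1)$ vertices of $G_1$ can join that (or any) common-degree block. Your per-degree-value count for the upper bound is in fact stated a little more carefully than the paper's version, which simply asserts the maximum without examining every candidate degree $k$, but the underlying idea is identical.
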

\begin{proof}
Let $V_1=\{v_1,v_2,v_3\ldots,v_n\}$ and $V_2=\{u_1,u_2,u_3\ldots,u_m\}$ be the vertex sets and $S_1$ and $S_2$ be the degree sequences of $G_1$ and $G_2$ respectively. Assume that $\eta_1=cn(G_1)$ and $\eta_2=cn(G_2)$ and let $d_i$ be the base of $cn(G_1)$ in $S_1$ and $d_j^{\prime}$ be the base of $cn(G_2)$ in $S_2$. Let $G=G_1\odot G_2$. We shall denote the $i$-th copy of $G_2$ in the corona $G=G_1\odot G_2$ by $G_2^{(i)}$, whose vertex set is given by $V_2^{(i)}=\{u_j^{(i)}:u_j\in V_2, 1\le j\le m\}$, where $1\le i\le n$.

In each copy $G_2^i$, in $G$, we have $d_{G}(u_j^{(i)})=d_{G_2}(u_j)+1$ and $d_G(v_i)=d_{G_1}(v_i)+|V_2|=d_{G_1}(v_i)+m$. Hence, it can be noted that the curling number of the subgraph $H_i$ of $G$ induced by the vertex set $V_2^{(i)}\cup\{v_i\}$ is given by
\begin{equation}
cn(H_i)=
\begin{cases}
1+ \eta_2, & \text{if}~~ d_{G_1}(v_i)+m=d_j^{\prime}+1,\\
\eta_2, & \text{otherwise}.
\end{cases}
\end{equation}
Then, if no vertex $v_i$ in $V_1$ holds the condition $d_{G_1}(v_i)+m=d_j^{\prime}+1$, then $cn(G)=\sum\limits_{i=1}^n cn(H_i)=n\eta_2$. Also, note that the maximum number of vertices of $G_1$ that can hold the property $d_{G_1}(v_i)+m=d_j^{\prime}+1$ is $cn(G_1)=\eta_1$. In this case, $cn(G)=\eta_1+n\eta_2$. Therefore, $n\eta_2\le cn(G_1\odot G_2)\le \eta_1+n\eta_2$.
\end{proof}

\begin{cor}
If $G_1(V_1,E_1)$ and $G_2(V_2,E_2)$ are regular graphs, then 
$$cn(G_1\odot G_2)=
\begin{cases}
|V_1|(1+|V_2|), & \text{if} ~ d_{v\in V_1}(v)+|V_2|=1+d_{u\in V_2}(u),\\
|V_1|\,|V_2|, & \text{otherwise}.
\end{cases}$$
\end{cor}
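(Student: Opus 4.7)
The plan is to reduce the corollary directly to Theorem \ref{T-CNC1} together with Proposition \ref{P-CCN}, exploiting the fact that regularity collapses the degree sequences of both $G_1$ and $G_2$ into single identity subsequences. Write $|V_1|=n$, $|V_2|=m$, and let $G_1$ be $r_1$-regular and $G_2$ be $r_2$-regular. Then in the corona $G=G_1\odot G_2$, every vertex coming from $G_1$ has degree $r_1+m$ and every vertex coming from any of the $n$ copies of $G_2$ has degree $r_2+1$. Hence the whole degree sequence of $G$ is built from at most two constant blocks: one of length $n$ (value $r_1+m$) and one of length $nm$ (value $r_2+1$).

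For the first case, $r_1+m=r_2+1$, the two blocks coincide, so every one of the $n(1+m)$ vertices of $G$ has the same degree. Thus $G$ is regular of order $|V_1|(1+|V_2|)$, and Proposition \ref{P-CCN} immediately yields $cn(G)=|V_1|(1+|V_2|)$.

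For the second case, $r_1+m\ne r_2+1$, I appeal to Theorem \ref{T-CNC1}. Since $G_1$ and $G_2$ are regular, $cn(G_1)=n$ and $cn(G_2)=m$, so the theorem gives
\[
nm\;\le\;cn(G_1\odot G_2)\;\le\;n+nm.
\]
The key observation is that in the notation of the theorem's proof, the condition $d_{G_1}(v_i)+m=d_j^{\prime}+1$ becomes $r_1+m=r_2+1$, which fails uniformly for every choice of $v_i$. Thus $cn(H_i)=\eta_2=m$ for each $i$, and the proof of Theorem \ref{T-CNC1} actually gives the sharp value $cn(G)=\sum_{i=1}^{n}cn(H_i)=nm=|V_1|\,|V_2|$, matching the claimed formula.

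The only subtle point is justifying that the two disjoint blocks of the degree sequence really cannot be merged into one longer identity subsequence in the second case; but this is exactly the content of $r_1+m\ne r_2+1$, which guarantees that the block of length $n$ and the block of length $nm$ have distinct values and hence remain separate identity subsequences. I do not anticipate any genuine obstacle beyond bookkeeping the two cases cleanly.
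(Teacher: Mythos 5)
Your proof is correct and follows essentially the same route as the paper: case one reduces to regularity of the corona plus Proposition \ref{P-CCN}, and case two observes that the $nm$ copy-vertices form one identity block of degree $r_2+1$ distinct from the $G_1$-block, forcing $cn(G_1\odot G_2)=|V_1|\,|V_2|$. Your detour through the bounds of Theorem \ref{T-CNC1} is harmless but unnecessary, since the direct two-block description of the degree sequence already gives the exact value.
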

\begin{proof}
If $d_{v_i\in V_1}(v_i)+|V_2|=1+d_{u_j\in V_2}(u_j)$, then $G_1\odot G_2$ is also a regular graph on $|V_1|(1+|V_2|)$ vertices and hence by Proposition \ref{P-CCN}, $cn(G_1\odot G_2)=|V_1|(1+|V_2|)$. 

If $d_{v_i\in V_1}(v_i)+|V_2|\ne 1+d_{u_j\in V_2}(u_j)$, then all vertices in all $V_1$ copies of $G_2$ have the same degree in $G_1\odot G_2$ which are different from the degrees of vertices of $G_1$ in $G_1\odot G_2$. Hence, $cn(G_1\odot G_2)=|V_1|\,|V_2|$. This completes the proof.
\end{proof}

The following result discusses the compound curling number of the corona of two regular graphs.

\begin{prop}
The compound curling number of the corona two regular graphs $G_1\odot G_2$ is given by $cn^c(G_1\odot G_2)=
\begin{cases}
|V_1|(|V_2|+1), & \text{if}~~ d_{G_1}(v_i)+|V_2|=1+d_{G_2}(u_j),\\
|V_1|^2 |V_2|, & \text{otherwise}.
\end{cases}$
\end{prop}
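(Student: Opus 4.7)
The plan is to split into the two cases exactly as in the statement and, in each case, read off the compound curling number directly from the degree sequence of $G_1 \odot G_2$.

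First I would suppose $G_1$ is $r_1$-regular on $|V_1|$ vertices and $G_2$ is $r_2$-regular on $|V_2|$ vertices. From the construction of the corona, every original vertex $v_i \in V_1$ acquires $|V_2|$ new neighbours (one per vertex of its attached copy of $G_2$), so $d_{G_1 \odot G_2}(v_i) = r_1 + |V_2|$ for all $i$. Likewise, every copy-vertex $u_j^{(i)}$ gains exactly one new neighbour (namely $v_i$), so $d_{G_1 \odot G_2}(u_j^{(i)}) = r_2 + 1$ for all $i,j$. Thus the corona has at most two distinct degrees, with multiplicities $|V_1|$ and $|V_1|\,|V_2|$ respectively.

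For the equality case $r_1 + |V_2| = r_2 + 1$, the two degree values coincide, so $G_1 \odot G_2$ is regular on $|V_1|(|V_2|+1)$ vertices. An appeal to Proposition \ref{P-CCN} then yields $cn^c(G_1 \odot G_2) = cn(G_1 \odot G_2) = |V_1|(|V_2|+1)$, matching the first branch.

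For the inequality case, the two degrees are distinct, so the well-arranged degree sequence decomposes into exactly two identity curling subsequences: one block of the form $(r_1+|V_2|)^{|V_1|}$ and one block of the form $(r_2+1)^{|V_1|\,|V_2|}$ (the order of concatenation depending on which of the two degrees is larger, but this does not affect the product). Applying Definition \ref{Def-3.1} directly gives
\[
cn^c(G_1 \odot G_2) = |V_1| \cdot |V_1|\,|V_2| = |V_1|^2\,|V_2|,
\]
which is the second branch. There is no real obstacle here: the only thing to double-check is that the block $(r_1+|V_2|)^{|V_1|}$ and the block $(r_2+1)^{|V_1|\,|V_2|}$ are each genuinely \emph{identity} curling subsequences (i.e.\ cannot be further refined into a repeated pattern giving a larger $k_i$), but this is automatic because each block already consists of a single repeated symbol, which is the maximal possible curling structure for a constant string.
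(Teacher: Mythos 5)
Your proof is correct and follows essentially the same route as the paper: compute the two degree values $r_1+|V_2|$ and $r_2+1$, invoke Proposition \ref{P-CCN} when they coincide, and otherwise read off $cn^c$ as the product of the two block lengths $|V_1|$ and $|V_1|\,|V_2|$ in the degree sequence. Your closing remark that each constant block is automatically an identity curling subsequence is a small extra check the paper omits, but it does not change the argument.
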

\begin{proof}
Let $\{v_1,v_2,v_3,\ldots, v_n\}$ be the vertex set of the given $r$-regular graph $G_1$ and $\{u_1,u_2,u_3,\ldots,u_m\}$ be the vertex set of the given $s$-regular graph $G_2$. Let $G=G_1\odot G_2$. Also, assume that $\{u_1^{(i)},u_2^{(i)},u_3^{(i)},\ldots,u_m^{(i)}\}$ be the vertex set of the $i$-th copy of $G_2$ in $G$. Then, $d_G(v_i)=r+m$ and $d_G(u_j^{(i)})=1+s$.

If $r+m=1+s$, then the graph $G_1\odot G_2$ becomes a regular graph on $n(m+1)$ vertices. Therefore, by Proposition \ref{P-CCN}, $cn^c(G_1\odot G_2)=n(m+1)$.

Next, assume that  $r+m=1+s$. Therefore, $G_1\odot G_2$ is not a regular graph. Since $G_1$ is a regular graph, $d(v_i)=r+m$ and hence the degree subsequence $S_1$ of $S$ corresponding to $G_1$ in $G$ is given by $S_1=(r+m)^n$. Now, let $H=\bigcup\limits_{i=1}^{n}G_2^{(i)}$. Then, for every vertex $u_j^{(i)}\in H$, $d_G(u_j^{(i)})=1+s$ and the corresponding degree subsequence $S_2$ of $S$ is    given by $(1+s)^{nm}$. Therefore, we have $S=(r+m)^n\circ (1+s)^{nm}$. Hence, $cn^c(G)=n^2m$. 
\end{proof}

\section{A Few Points on Curling Numbers of Direct Products of Graphs}

%The \textit{lexicographic product} of two graphs $G_1$ and $G_2$, denoted by $G_1[G_2]$, is the graph with the vertex set $V(G_1)\times V(G_2)$ and the vertices $(u,u^{\prime}), (v, v^{\prime})$ are adjacent in $G_1[G_2]$ if and only if either $uv\in E(G)$ or $u=v$ and $u^{\prime}v^{\prime} \in E(G_2)$. It is proved in \cite{SCML} that for a vertex $(v_i,u_j)\in G_1[G_2]$,  $d_G(v_i,u_j)=|V(G_2)|d_{G_1}(v_i)+d_{G_2}(u_j)$, where $1\le i\le |V(G_1)|, 1\le j\le |V(G_2)|$.

The \textit{tensor product} or \textit{direct product} of graphs $G_1$ and $G_2$ is the graph $G_1\times G_2$  with the vertex set $V(G_1\times G_2)=V(G_1)\times V(G_2)$ and the vertices $(v,u)$ and $(v',u')$ are adjacent in $G_1\times G_2$ if and only if $vv'\in E(G_1)$ and $uu' \in E(G_2)$. For a vertex $(v_i,u_j)\in G=G_1\times G_2$, we have $d_G(v_i,u_j)=d_{G_1}(v_i)d_{G_2}(u_j)$, where $1\le i\le |V(G_1)|, 1\le j\le |V(G_2)|$ (see \cite{SCML}). 

%\ni Analogous to  the corresponding results, we can prove the following theorem.

%\begin{thm}
%For two graphs $G_1$ and $G_2$, we have $cn(G_1[G_2])=cn(G_1\times G_2)$.
%\end{thm}
%\begin{proof}
%The proof is exactly as the proofs Theorem \ref{P-CNP1a}.
%\end{proof}

%It is also very much interesting to verify whether the above property holds for other products of two given graphs, which are not mentioned in this paper, and this problem seems to be a promising one.
Similar to that of the Cartesian product of graphs, the compatible class of direct product of the two graphs $G_1(V_1,E_1)$ and $G_2(V_2, E_2)$ can be defined as $C_k=\{(v_i,v_j)\in V_1\times V_2: d(v_i)\cdot d(v_j)=k\}$, where $\delta(G_1)\cdot \delta(G_2)\le k \le \Delta(G_1)\cdot \Delta(G_2)$. 

Determining the degree sequence of the direct product of two graphs is also possible only when the patterns of degree sequence of the factor graphs are known. In this context, we discuss the curling number and compound curling number of the direct products of graph classes we considered in the previous sections.

%\begin{prop}\label{P-CNDP}
Let $G_1$ be a $k_1$-regular graph on $m$ vertices and $G_2$ be a $k_2$-regular graphs on $n$ vertices. Then, all $mn$ vertices have the same degree $k_1\cdot k_2$. Therefore,  $cn(G_1\times G_2)=mn=cn^c(G_1\times G_2)$. 
%\end{prop}

If $G_1$ be a $k$-regular graph on $m$ vertices and $G_2$ be an arbitrary graph. Let $d(v_j)=d_j$ for $v_j\in V(G_2)$. Then, a vertex $v_{ij}$ in $V(G_1\times G_2)$ has the degree $kd_j$. Then, we have $cn(G_1\times G_2)=m\cdot cn(G_2)$.

The graph $P_2\times P_2$ is isomorphic the disjoint union $K_2\cup K_2$ and hence is $1$-regular graph on $4$ vertices. Hence, by the above result, we have $cn(P_2\times P_2)=cn^c(P_2\times P_2)=4$. Note that any graph $P_m\times P_2$ is isomorphic to the disjoint union of two paths $P_m$. The degree sequence of $P_3\times P_2$ is $(1)^4\circ (2)^2$. Therefore, $cn(P_3\times P_2)=4$ and $cn^c(P_3\times P_2)=8$. Now, for $m\ge 4$, the degree sequence of $P_m\times P_2$ is $(2)^{2(m-2)}\circ (1)^4$. Hence, for $m\ge 4$, we have $cn(P_m\times P_2)=2m-4$ and $cn^c(P_m\times P_2)=8(m-2)$. For $m,n\ge 3$, the degree sequence of $P_m\times P_n$ is $(1)^4\circ 2^{2(m+n-4)}\circ (4)^{(m-2)(n-4)}$. Therefore, $cn(P_m\times P_n)=\max\{2(m+n-4), (m-2)(n-2)\}$ and $cn^c(P_m\times P_n)=8(m-2)(n-2)(m+n-4)$.

The graph $C_m\times P_2$ is a $2$-regular graph on $2m$ vertices and hence we have $cn(C_m\times P_2)=2m$. For $n\ge 3$, the degree sequence of the graph $C_m\times P_n$ is $(2)^{2m}\circ (4)^{m(n-2)}$. Therefore, $cn(C_m\times C_n)=\{2m,m(n-2)\}$ and $cn^c(C_m\times C_n)=2m^2(n-2)$.

\section{Conclusion}

In this paper, we have discussed the concepts of curling number and compound curling number of various products of graphs. We noted an important property that the curling number of all four fundamental products of two graphs are the product of the curling numbers of the two individual graphs.  More problems regarding the curling number and compound curling number of certain other graph classes, graph operations, certain other graph products and graph powers are still to be settled. Some other problems we have identified in this area for further works are the following. 

\begin{prob}{\rm 
Determine the curling number and the compound curling numbers of different products of two graphs, in which one is a complete graph and the other is a path or a cycle.}
\end{prob}

\begin{prob}{\rm 
Determine the curling number and the compound curling number of the other graph products like lexicographic product and rooted product.}
\end{prob}

\begin{prob}{\rm 
Determine a closed formula for the curling number and the compound curling number of arbitrary products of given graphs.}
\end{prob}

\begin{prob}{\rm 
Identify and characterise the product graphs whose curling numbers are the product of the curling numbers of their factors graphs.}
\end{prob}

\begin{prob}{\rm 
Identify and characterise the graphs whose different products graphs have the same curling number (and/or the same compound curling number).}
\end{prob}

\begin{prob}{\rm 
Determine the compound curling numbers different products of graphs in which one graph is a regular graph}.
\end{prob}

There are more problems in this area which seem to be promising for further investigations. All these facts highlights a wide scope for further studies in this area.

\section{Acknowledgements}

The first and third authors of this article would like to dedicate this paper to their research supervisor Prof. (Dr.) K. A. Germina, who is currently a Professor in the Department of Mathematics, University of Botswana, Gaborone, Botswana.

\end{document}